\newcommand{\Z}{\mathbf{Z}}
\newcommand{\sC}{\mathcal{C}}
\newcommand{\sF}{\mathcal{F}}
\newcommand{\sI}{\mathcal{I}}
\newcommand{\sS}{\mathcal{S}}
\newcommand{\uT}{\underline{T}}
\newcommand{\Tor}{\operatorname{Tor}}
\newcommand{\Hom}{\operatorname{Hom}}
\newcommand{\Ext}{\operatorname{Ext}}
\newcommand{\Ab}{\operatorname{\bf Ab}}
\newcommand{\tors}{{\operatorname{tors}}}
\newcommand{\by}{\xrightarrow}
\newcommand{\iso}{\by{\sim}}
\renewcommand{\lim}{\varprojlim}
\renewcommand{\qed}{\hfill $\Box$\medskip}
\newcommand{\oo}{\operatorname{\overset{L}{\otimes}}}
\renewcommand{\phi}{\varphi}
\renewcommand{\epsilon}{\varepsilon}
\newcounter{spec}
\newenvironment{thlist}{\begin{list}{\rm{(\roman{spec})}}%
{\usecounter{spec}\labelwidth=20pt\itemindent=0pt\labelsep=10pt}}%
{\end{list}}%
\newtheorem{thm}{Theorem}
\newtheorem{cor}[thm]{Corollary}
\theoremstyle{definition}
\begin{document}
\title{A universal coefficient theorem for sheaf cohomology}
\author{Bruno Kahn}
\address{CNRS, Sorbonne Université and Université Paris Cité, IMJ-PRG\\ Case 247\\4 place
Jussieu\\75252 Paris Cedex 05\\France}
\email{bruno.kahn@imj-prg.fr}
\begin{abstract} We give a sheaf-theoretic version of the universal coefficient theorem.
\end{abstract}
\keywords{Sheaf cohomology, projection formula}
\subjclass[2020]{14F08, 18G15}
\date{\today}
\maketitle


The universal coefficient theorem, originally due to Eilenberg and Mac Lane \cite{eml}, has known several variants and improvements. To my surprise, I didn't find a version for sheaf cohomology in the literature. Maybe the reason is simply that nobody had any need for it, which is my case now.

The question is this: let $\sF$ be a sheaf of abelian groups over a site $X$, and let $A$ be an abelian group. Compare the groups $H^r(X,\sF)\otimes A$ and $H^r(X,\sF\otimes A)$.

In topology, universal coefficient theorems are usually expressed in terms of $\otimes$ and $\Tor$ for homology, and in terms of $\Hom$ and $\Ext$ for cohomology, e.g. \cite[\S 3.6]{weibel}. The basic reason is that the functor $\otimes$ is right exact, while $\Hom$ is left exact. Here we want to mix $\otimes$ with cohomology, which makes things more complicated; in particular, there is \emph{a priori} no obvious comparison map between the above two groups. Another complication comes from the possible torsion in $\sF$. Yet another more subtle issue is that sheaf cohomology does not necessarily commute with filtered colimits.

As could be expected, a good solution is obtained by using derived categories. This will give a universal coefficient theorem in the form of a projection formula. Namely, let $f:X\to Y$ be a morphism of sites and let $\sS(X)$ (resp. $\sS(Y)$) be the category of sheaves of abelian groups on $X$ (resp. $Y$). The inverse image functor $f^*:D(\sS(Y))\to D(\sS(X))$ has the right adjoint $Rf_*$, defined on all $D(\sS(X))$ if $X$ has finite cohomological dimension and on $D^+(\sS(X))$ in general. Let $C\in D^b(\sS(Y))$ and $\sC\in D(\sS(X))$, bounded below unless $X$ has finite cohomological dimension. We have a ``projection morphism''
\begin{equation}\label{eq0}
Rf_*\sC\oo C\to Rf_*(\sC\oo f^*C)
\end{equation}
constructed as the adjoint of
\[f^*(Rf_*\sC\oo C)\iso f^*Rf_*\sC\oo  f^*C \by{\epsilon_\sC\oo 1}\sC \oo f^*C\]
where the first isomorphism is the inverse of the monoidal structure of $f^*$, and $\epsilon$ is the counit of the adjunction.

Suppose that $Y$ is the point and $f$ is the structural morphism, so that $\sS(Y)=\Ab$, the category of abelian groups. Then $Rf_*=R\Gamma(X,-)$, and

\begin{thm}\label{t1} The morphism \eqref{eq0} is an isomorphism in the following cases:
\begin{thlist}
\item $C$ is (isomorphic to) a perfect complex;
\item $R\Gamma$ commutes with infinite direct sums (e.g. $X$ is coherent \cite[VI]{sga}).
\end{thlist}
\end{thm}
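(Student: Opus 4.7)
\emph{Proof proposal.} For fixed $\sC$, I view \eqref{eq0} as a natural transformation $\alpha_C : F(C) \to G(C)$ between the triangulated functors
\[F(C) := R\Gamma(X,\sC) \oo C, \qquad G(C) := R\Gamma(X,\sC \oo f^*C)\]
from $D(\Ab)$ to $D(\Ab)$, and let $\sE\subset D(\Ab)$ denote the full subcategory of objects $C$ for which $\alpha_C$ is an isomorphism. Since $F$ and $G$ are exact triangulated, $\sE$ is automatically a thick triangulated subcategory (closed under shifts, cones and direct summands).

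The first step is the base case $C = \Z$. Here $f^*\Z \simeq \un$, the constant sheaf with stalks $\Z$, which is flat; hence $\sC \oo \un \simeq \sC$, while $R\Gamma(X,\sC) \oo \Z = R\Gamma(X,\sC)$. Unraveling the construction of $\alpha_C$ as the adjoint of $\epsilon_\sC \oo 1$, and using the monoidal structure of $f^*$ together with flatness of $\un$, I expect to see that $\alpha_\Z$ is the identity of $R\Gamma(X,\sC)$, so $\Z \in \sE$.

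For (i), the thick triangulated subcategory of $D(\Ab)$ generated by $\Z$ is precisely the subcategory of perfect complexes, so $\sE$ contains every perfect $C$. For (ii), I would additionally show that $\sE$ is closed under arbitrary small direct sums, hence is localizing. For $F$ this is automatic, since $\oo$ preserves colimits in each variable; for $G$, the functor $f^*$ is a left adjoint and preserves all colimits, derived tensor product with $\sC$ preserves direct sums (a K-flat resolution of $\bigoplus_\alpha \sF_\alpha$ can be taken to be a direct sum of K-flat resolutions of the $\sF_\alpha$), and $R\Gamma$ preserves direct sums by hypothesis. Since every abelian group admits a free resolution of length at most one, the localizing subcategory of $D(\Ab)$ generated by $\Z$ is all of $D(\Ab)$, and in particular $\sE \supseteq D^b(\Ab)$.

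The substantive point is the base case: checking that the formal adjoint construction of \eqref{eq0} really collapses to the identity when $C$ equals the tensor unit, after the canonical identifications $f^*\Z \simeq \un$ and $\sC \oo \un \simeq \sC$. Once this is granted, the rest is a purely formal thick/localizing subcategory argument, the only technical care being the standard K-flat resolution ensuring that the derived tensor product on $D(\sS(X))$ commutes with direct sums.
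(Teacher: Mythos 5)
Your argument is correct, and its skeleton coincides with the paper's: everything is reduced to the unit $C=\Z[0]$, where the projection map \eqref{eq0} is the identity up to the canonical identifications (this is the unit coherence of the monoidal adjunction, which the paper dismisses as ``trivial'' and you hedge on, harmlessly), and case (i) then follows because the thick (indeed, since $\Z$ is a PID, merely the triangulated) subcategory of $D(\Ab)$ generated by $\Z$ consists exactly of the perfect complexes. Where you genuinely diverge is case (ii). The paper writes a bounded complex of abelian groups as a filtered homotopy colimit of perfect complexes realized as a telescope, citing B\"okstedt--Neeman, and then uses the hypothesis that $R\Gamma$ commutes with infinite direct sums to pass the isomorphism through the telescope. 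You instead observe that the isomorphism locus $\sE$ is a localizing subcategory once $R\Gamma$ commutes with direct sums (the other functors in sight --- $f^*$, $-\oo-$ --- always do), and that $\Z$ being hereditary gives every abelian group a length-one free resolution, so every object of $D^b(\Ab)$ is reached from (possibly infinite) direct sums of copies of $\Z$ by finitely many cones, with no summand closure or telescope needed. For the bounded range actually required by the theorem this is a slightly more elementary and self-contained route; the paper's telescope argument is what you would need anyway to justify your stronger parenthetical claim that the localizing subcategory generated by $\Z$ is \emph{all} of $D(\Ab)$ (your one-line justification via length-one resolutions only covers the bounded case, but since you only conclude $\sE\supseteq D^b(\Ab)$, which is what the theorem asserts, this is not a gap). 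Both proofs use the hypothesis (ii) in exactly the same way, namely to move $R\Gamma$ past the infinite direct sums that appear once $C$ is no longer perfect.
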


\begin{proof} In Case (i) we reduce to $C=\Z[0]$ and the statement becomes trivial. Case (ii) follows from Case (i) as any bounded complex of abelian groups is a filtering colimit of perfect complexes, that we can write as a telescope in $D(\Ab)$ \cite{bn}.
\end{proof}

I don't know good conditions on a general $f$ to make such a proof work.\\

Now take the special case $C=A[0]$, $\sC=\sF[0]$. Theorem \ref{t1} gives an isomorphism
\[R\Gamma(X,\sF)\oo A\iso R\Gamma(X,\sF\oo A)\]
which is our universal coefficient theorem. Let us decipher it. 

Using the exact triangle $\Tor(\sF,A)[1]\to \sF\oo A\to (\sF\otimes A)[0]\by{+1}$, we get a long exact sequence
\begin{multline*}
\dots\to  H^{r+1}(X,\Tor(\sF,A))\to H^{r}(X,\sF\oo A)\to  H^{r}(X,\sF\otimes A)\\
 \to H^{r+2}(X,\Tor(\sF,A))\to\dots
\end{multline*}

On the other hand, there are split short exact sequences
\[0\to  H^{r}(X,\sF)\otimes A \to H^{r}(R\Gamma(X,\sF)\oo A)\to \Tor(H^{r+1}(X,\sF),A)\to 0 \]
given by the classical (!) universal coefficient theorem  \cite[Th. 3.6.2]{weibel}. Putting this together and using the ``lemma of the 700th'' \cite[Lemma p. 142]{mt}, we get 

\begin{cor}\label{c1} Under the following conditions:
\begin{thlist}
\item $A$ is finitely generated, or
\item $H^*$ commutes with filtering colimits (e.g. $X$ is coherent),
\end{thlist}
there are two complexes
\[ 0\to H^{r}(X,\sF)\otimes A\to H^{r}(X,\sF\otimes A)  \to H^{r+2}(X,\Tor(\sF,A))\]
\[H^{r-1}(X,\sF\otimes A)\to  H^{r+1}(X,\Tor(\sF,A))\to \Tor(H^{r+1}(X,\sF),A)\to 0  \]
with isomorphic homology. If $\Tor(\sF,A)=0$, we have split short exact sequences
\[0\to H^{r}(X,\sF)\otimes A\to H^{r}(X,\sF\otimes A)\to \Tor(H^{r+1}(X,\sF),A)\to 0\]
 recovering the classical formulation of the universal coefficient theorem.\qed
\end{cor}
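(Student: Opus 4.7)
The plan is to apply Theorem \ref{t1} to produce the isomorphism
\[M_r:=H^r\bigl(R\Gamma(X,\sF)\oo A\bigr)\iso H^r(X,\sF\oo A),\]
as in the discussion preceding the corollary, and then to combine the two distinct descriptions of $M_r$ afforded by the two sides.

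First I would check that one of the hypotheses of Theorem \ref{t1} is satisfied. Under (i), a finitely generated abelian group admits a two-term resolution by finitely generated free $\Z$-modules (since $\Z$ is a PID), so $A[0]$ is isomorphic in $D(\Ab)$ to a perfect complex and Theorem \ref{t1}(i) applies. Under (ii), $H^*$ commuting with filtering colimits readily forces $R\Gamma$ to commute with infinite direct sums, so Theorem \ref{t1}(ii) applies. In either case, we obtain the displayed isomorphism for $M_r$.

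Next I would assemble the relevant data. The classical universal coefficient theorem \cite[Th.~3.6.2]{weibel} yields a split short exact sequence
\[0\to H^r(X,\sF)\otimes A\to M_r\by{q_r}\Tor(H^{r+1}(X,\sF),A)\to 0,\]
while the triangle $\Tor(\sF,A)[1]\to\sF\oo A\to(\sF\otimes A)[0]\by{+1}$ produces a long exact sequence with connecting maps $\alpha_r:H^{r+1}(X,\Tor(\sF,A))\to M_r$, $\beta_r:M_r\to H^r(X,\sF\otimes A)$, $\gamma_r:H^r(X,\sF\otimes A)\to H^{r+2}(X,\Tor(\sF,A))$. The first complex of the statement is then the composite $H^r(X,\sF)\otimes A\hookrightarrow M_r\by{\beta_r}H^r(X,\sF\otimes A)\by{\gamma_r}H^{r+2}(X,\Tor(\sF,A))$, which is a complex because $\gamma_r\beta_r=0$; the second is $H^{r-1}(X,\sF\otimes A)\by{\gamma_{r-1}}H^{r+1}(X,\Tor(\sF,A))\by{\alpha_r}M_r\twoheadrightarrow\Tor(H^{r+1}(X,\sF),A)$, which is a complex because $\alpha_r\gamma_{r-1}=0$.

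Finally, invoking the ``lemma of the 700th'' \cite[Lemma p.~142]{mt} amounts to a direct diagram chase: the homology of the first complex at $H^r(X,\sF)\otimes A$ and the homology of the second complex at $H^{r+1}(X,\Tor(\sF,A))$ both canonically identify with $\IM(\alpha_r)\cap\bigl(H^r(X,\sF)\otimes A\bigr)$ viewed inside $M_r$, while the homology of the first complex at $H^r(X,\sF\otimes A)$ and the homology of the second complex at $\Tor(H^{r+1}(X,\sF),A)$ both canonically identify with the cokernel of the composite $q_r\circ\alpha_r$. The only real obstacle is organizing this chase cleanly; it uses only the split UCT short exact sequence alongside the exactness of the long sequence, with no further input. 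The last assertion follows immediately: when $\Tor(\sF,A)=0$ the triangle degenerates to $\sF\oo A\iso(\sF\otimes A)[0]$, hence $M_r=H^r(X,\sF\otimes A)$, and the classical universal coefficient theorem specializes to the claimed split short exact sequence.
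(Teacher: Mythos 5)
Your proposal is correct and follows essentially the same route as the paper: apply Theorem \ref{t1} with $C=A[0]$ (reducing (i) to a perfect complex and (ii) to commutation with direct sums), then combine the long exact sequence from the triangle $\Tor(\sF,A)[1]\to\sF\oo A\to(\sF\otimes A)[0]\by{+1}$ with the classical universal coefficient sequence for $H^r(R\Gamma(X,\sF)\oo A)$ via the Merkurjev--Tignol lemma. Your explicit identification of the matching homology groups (both $\IM(\alpha_r)\cap(H^r(X,\sF)\otimes A)$ and $\Coker(q_r\circ\alpha_r)$) just spells out the diagram chase the paper leaves implicit, and the degenerate case $\Tor(\sF,A)=0$ is handled exactly as intended.
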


\begin{cor}\label{c2} Under the conditions of Corollary \ref{c1}, assume that $H^{r+1}(X,\sF)$ is torsion-free and $H^{r+1}(X,\Tor(\sF,A))=0$. Then there is an exact sequence
\[ 0\to H^{r}(X,\sF)\otimes A\to H^{r}(X,\sF\otimes A)  \to H^{r+2}(X,\Tor(\sF,A)).\quad \square\]
\end{cor}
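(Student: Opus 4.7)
The plan is to assemble the statement from the two exact sequences that were already used to derive Corollary \ref{c1}, together with the two vanishing hypotheses. Under the conditions of Corollary \ref{c1}, Theorem \ref{t1} provides the isomorphism $R\Gamma(X,\sF)\oo A \iso R\Gamma(X,\sF\oo A)$, so the corresponding cohomology groups may be used interchangeably throughout.

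First, applying $H^r(X,-)$ to the triangle $\Tor(\sF,A)[1]\to \sF\oo A\to (\sF\otimes A)[0]\by{+1}$ extracts the four-term exact segment
\[H^{r+1}(X,\Tor(\sF,A)) \to H^r(X,\sF\oo A) \to H^r(X,\sF\otimes A) \to H^{r+2}(X,\Tor(\sF,A)),\]
and the assumption $H^{r+1}(X,\Tor(\sF,A))=0$ turns this into a three-term sequence prepended by $0$. Second, the classical universal coefficient theorem \cite[Th.~3.6.2]{weibel} applied to $R\Gamma(X,\sF)\oo A$ yields the split short exact sequence
\[0 \to H^r(X,\sF)\otimes A \to H^r(X,\sF\oo A) \to \Tor(H^{r+1}(X,\sF),A) \to 0;\]
the torsion-freeness of $H^{r+1}(X,\sF)$ kills the rightmost term and produces an isomorphism $H^r(X,\sF)\otimes A \iso H^r(X,\sF\oo A)$. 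Substituting this identification into the preceding four-term sequence delivers the asserted exact sequence.

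I do not foresee any real obstacle: the two vanishing hypotheses have been chosen precisely to annihilate the two terms ($\Tor(H^{r+1}(X,\sF),A)$ and $H^{r+1}(X,\Tor(\sF,A))$) whose presence in Corollary \ref{c1} obstructs exactness of the first complex there. Equivalently, one may observe that both hypotheses make the middle two terms of the second complex of Corollary \ref{c1} vanish, so its isomorphic homology partner—the first complex—must itself be exact, which is the statement of Corollary \ref{c2}.
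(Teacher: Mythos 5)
Your argument is correct and is exactly what the paper intends by ending Corollary \ref{c2} with $\square$: the hypothesis $H^{r+1}(X,\Tor(\sF,A))=0$ prepends a zero to the long exact sequence of the triangle, while torsion-freeness of $H^{r+1}(X,\sF)$ collapses the classical UCT sequence to the identification $H^{r}(X,\sF)\otimes A\simeq H^{r}(X,\sF\oo A)$, and substituting gives the claim. Your closing remark, that the two hypotheses kill the middle terms of the second complex in Corollary \ref{c1} and hence force exactness of the first, is an equally valid (and slightly more self-contained) way to phrase the same deduction.
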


\end{document}